\theoremstyle{plain}
\newtheorem{theorem}{Theorem}[section]
\newtheorem{lemma}[theorem]{Lemma}
\theoremstyle{definition}
\newtheorem{example}[theorem]{Example}
\theoremstyle{definition}
\DeclareMathOperator{\im}{\mathrm im}
\newcommand{\Z}{\mathbb{Z}}
\newcommand{\ZCat}{\mathbf{Z}}
\newcommand{\ZZCat}{\mathbf{ZZ}}
\newcommand{\idf}[1]{\emph{#1}}
\newcommand{\J}{{\mathcal J}}
\newcommand{\vect}{\mathbf{Vec}}
\date{}
\title{Interval Decomposition of Infinite Zigzag Persistence Modules}
\author{Magnus Bakke Botnan}
\begin{document}
\nocite{*}
\maketitle

\begin{abstract}
We show that every infinite zigzag persistence module decomposes into a direct sum of interval persistence modules. 
\end{abstract}
 
\section*{Introduction} A \emph{discrete persistence module} is a functor $M\colon \ZCat \to \vect$ where $\ZCat$ is the integers viewed as a poset category and $\vect$ is the category of finite dimensional vector spaces over some fixed field $\mathbb{F}$. It was proved by Webb\cite{webb} that a discrete persistence module admits a decomposition into a direct sum of \emph{interval persistence modules}. This was later generalized by Crawley-Boevey\cite{wcb} to persistence modules indexed over the category of real numbers. Another type of persistence module is the \emph{zigzag} persistence module considered by Carlsson and de Silva\cite{carlsson}. Such persistence modules also decompose into intervals, a fact well-known to representation theorists. In this note we generalize this result to infinite zigzags, or, in the language of representation theory, to locally finite representations of $A_\infty^\infty$ with arbitrarily ordered arrows. 

The fact that a locally finite dimensional representation of $A_\infty^\infty$ admits a direct sum decomposition into interval summands is known\cite{paq}, but to the best of the author's knowledge, the result is not present in the literature. Moreover, it should be emphasized that \cref{prop:decomp} appears in more general form in Section 6 (Covering Theory) of Ringel's Izmir Notes\cite{ringel}. 

Working with infinite zigzags proved itself convenient in an ongoing project but there was an apparent lack of citable sources on the interval decomposition of such zigzags. The author hopes this note fills that gap. Moreover, an immediate consequence of this approach is a new proof for the interval decomposition of discrete persistence modules. 

\paragraph{Acknowledgements} The author wishes to thank Jeremy Cochoy, Michael Lesnick, Steve Oudot and Johan Steen for valuable feedback.

\section{Zigzag Persistence Modules}
A \idf{zigzag persistence module} is a sequence of vector spaces and linear maps indexed by the integers 
\[V\colon \cdots \leftrightarrow V_{-1} \leftrightarrow V_0 \leftrightarrow V_1 \leftrightarrow \cdots \]
where $\leftrightarrow$ denotes an arrow of type $\leftarrow$ or $\rightarrow$.  This is a generalization of discrete persistence modules for which arrows point in the same direction. In this note we will restrict ourselves to zigzags persistence modules of the form  
\[ V\colon \cdots \rightarrow V_{-1} \leftarrow V_0 \rightarrow V_{1} \leftarrow \cdots, \]
i.e. where we have sinks at all odd numbers and sources at even numbers. Any other zigzag persistence module can be understood from such a zigzag by adding appropriate isomorphisms. 

In the language of category theory a zigzag persistence module is a functor $V\colon \ZZCat \to \vect$ where $\ZZCat$ is the category with objects the integers $\mathbb{Z}$, together with morphisms $i\to i-1$ and $i\to i+1$ for all even numbers $i$. We shall denote the morphisms $V(i\to i-1)$ and $V(i\to i+1)$ by $g_i$ and $f_i$, respectively.  For integers $s \leq t$ we define the \emph{restriction of $V$ to $[s,t]$} to be the persistence module $V|_{[s,t]}\colon \ZZCat|_{[s,t]}\to \vect$ where $\ZZCat|_{[s,t]}$ is the full subcategory of $\ZZCat$ with objects $\{ i \colon s\leq i\leq t\}$. A zigzag persistence module indexed by $\ZZCat|_{[s,t]}$ is \emph{finite}. 

 For $a\leq b\in \Z\cup \{\pm \infty\}$ define an \idf{interval (zigzag) persistence module} $I^{[a,b]}\colon \ZZCat \to \vect$ on objects by 
\[
I^{[a,b]}_i = \begin{cases} \mathbb{F} & \text{if $a\leq i\leq b$}\\ 0 & \text{otherwise}\end{cases}
\]
and which assigns the identity morphism to any morphism connecting two non-zero vector spaces. Note that we have adopted the convention $-\infty < i < +\infty$ for all $i\in \Z$. 

For zigzag persistence modules $U, W\colon \ZZCat\to \vect$, define their \idf{direct sum} $U\oplus W$ to be the persistence module defined on objects by $(U\oplus W)_i = U_i\oplus W_i$ and on morphisms by $(U\oplus W)(\alpha) = U(\alpha)\oplus W(\alpha)$. We say that $V$ is \idf{decomposable} if there exist non-zero $U,W$ such that $V\cong U\oplus W$. If no such decomposition exists then $V$ is \idf{indecomposable}. 

A weaker form of indecomposability is indecomposability over an interval. Let $s\leq t$ be integers; $0\neq W\colon \ZZCat \to \vect$ is \idf{$[s,t]$-indecomposable} if for any decomposition $W= W^1\oplus W^2$, then either $W^1_i = 0$ for all $i\in [s,t]$, or $W^2_i = 0$ for all $i\in [s,t]$. Moreover, an \idf{$[s,t]$-decomposition} of $V$ is a decomposition $V=\bigoplus_{j\in\J} W^{j}$ such that $W^{j}$ is $[s,t]$-indecomposable for all $j\in\J$. It is not hard to see that such a decomposition exists for every $[s,t]$: if $V$ is $[s,t]$-indecomposable then we are done. Otherwise, decompose $V$ and inductively choose a $[s,t]$-decomposition for each of its summands. Since the sum of dimensions $\dim V_s + \ldots +\dim V_t$ is finite, this process must terminate after a finite number of steps. 

\begin{lemma}\label{lem}
$V$ is indecomposable if and only if $V$ is $[-k,k]$-indecomposable for all non-negative integers $k$.
\end{lemma}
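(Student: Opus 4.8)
The plan is to prove the contrapositive in the hard direction and handle the easy direction directly. The forward direction is immediate: if $V$ is indecomposable and $V = W^1 \oplus W^2$ with $W^1, W^2 \neq 0$, then this same decomposition witnesses that $V$ fails to be $[-k,k]$-indecomposable for $k$ large enough that one of $W^1, W^2$ is supported somewhere in $[-k,k]$; but actually one must be slightly careful, since the definition of $[s,t]$-indecomposability only forbids \emph{both} summands being nonzero on $[-k,k]$. So more precisely: if $V$ is $[-k,k]$-indecomposable for all $k$, I must rule out a decomposition $V = W^1 \oplus W^2$ with both summands nonzero. If such a decomposition existed, pick $k$ large enough that $W^1$ is nonzero on some index in $[-k,k]$ and $W^2$ is nonzero on some index in $[-k,k]$ (possible since each is nonzero somewhere); this contradicts $[-k,k]$-indecomposability. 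That gives ``$\Leftarrow$''.

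For the other direction, suppose $V$ is decomposable and I wish to produce some $k$ for which $V$ is not $[-k,k]$-indecomposable. The naive attempt --- take a decomposition $V = W^1 \oplus W^2$ and a $k$ large enough --- does not immediately work, because it is conceivable that $W^1$ and $W^2$ are each supported \emph{outside} every fixed window, or rather that for each $k$ one of them vanishes on $[-k,k]$ while the other does not, so no single $k$ exhibits the failure. The key device to rule this out: for each $k$, fix an $[-k,k]$-decomposition $V = \bigoplus_{j} W^{j}_{(k)}$ (these exist by the discussion preceding the lemma). The number of summands that are nonzero on $[-k,k]$ is at most $\dim V_{-k} + \cdots + \dim V_k$, hence finite; and refining as $k$ grows, the summands supported near $0$ stabilize in a suitable sense. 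I expect the cleanest route is: show that if $V$ were $[-k,k]$-indecomposable for every $k$, then any global decomposition $V = \bigoplus_j W^j$ can have at most one summand that is nonzero anywhere, forcing indecomposability.

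Concretely, here is the argument I would write for ``$\Rightarrow$'' (contrapositive of ``$V$ indecomposable $\Rightarrow$ $[-k,k]$-indec.\ for all $k$''), restated as: if $V$ is $[-k,k]$-indecomposable for all $k$ then $V$ is indecomposable. Take any decomposition $V = \bigoplus_{j \in \J} W^j$ into nonzero summands; I claim $|\J| = 1$. Suppose not, and pick distinct $j_1, j_2 \in \J$. Regrouping, write $V = W^{j_1} \oplus (W^{j_2} \oplus \bigoplus_{j \neq j_1, j_2} W^j) = W^{j_1} \oplus U$ where both $W^{j_1}$ and $U$ are nonzero (the latter because it contains $W^{j_2} \neq 0$). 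Choose $k$ with $W^{j_1}_{i_1} \neq 0$ for some $i_1 \in [-k,k]$ and $W^{j_2}_{i_2} \neq 0$ for some $i_2 \in [-k,k]$; then in the decomposition $V = W^{j_1} \oplus U$ neither factor vanishes on all of $[-k,k]$, contradicting $[-k,k]$-indecomposability.

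The step I expect to require the most care is making sure the direction ``$V$ indecomposable $\Rightarrow$ $[-k,k]$-indecomposable for all $k$'' is genuinely trivial and not circular: indeed if $V$ fails to be $[-k,k]$-indecomposable for some $k$, there is a decomposition $V = W^1 \oplus W^2$ with $W^1$ nonzero somewhere on $[-k,k]$ and $W^2$ nonzero somewhere on $[-k,k]$; in particular both $W^1$ and $W^2$ are nonzero, so $V$ is decomposable. Thus no subtlety arises there. In short, both directions reduce to the bookkeeping observation that a summand which is nonzero \emph{at all} is nonzero \emph{on some finite window} $[-k,k]$, so global (in)decomposability is detected by the windows; I would present the two implications as above, with the regrouping trick being the one genuinely non-formal ingredient.
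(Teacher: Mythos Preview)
Your argument is correct and matches the paper's: both directions reduce to the observation that a nonzero summand is nonzero at some index, hence inside some window $[-k,k]$. Your exposition is tangled, though---you prove $\Leftarrow$ correctly in the first paragraph, then spend the next two paragraphs re-proving the \emph{same} direction under the label ``the other direction'' while worrying about non-issues (the ``naive attempt'' you dismiss is exactly what you already did, and it works), before finally dispatching $\Rightarrow$ in the last paragraph; the paper handles $\Rightarrow$ in four words (``This follows by definition'') and $\Leftarrow$ in one sentence by picking $k = \max(|i_1|,|i_2|)$.
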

\begin{proof}
$\Leftarrow$: Assume that $V=U\oplus W$ for non-trivial $U$ and $W$. Then there exist indices $i_1$ and $i_2$ such that $U_{i_1}\neq 0$ and $W_{i_2}\neq 0$. This contradicts that $V$ is $[-\max(|i_1|, |i_2|), \max(|i_1|, |i_2|)]$-indecomposable. $\Rightarrow$: This follows by definition.  
\end{proof}

\subsection*{The Only Indecomposables are Interval Persistence Modules}
First we prove that every indecomposable is an interval, and then we show that every zigzag persistence module decomposes into a direct sum of indecomposables. 

The following result is well-known and can be found in many sources. For an elementary, self-contained proof, see \cite{ringel2}. 
\begin{theorem}\label{teo:An}
If $V$ is a finite zigzag persistence module, then $V$ decomposes as a finite direct sum of interval modules. 
\end{theorem}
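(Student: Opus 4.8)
The plan is to prove this by induction on the number of objects $t-s+1$ of the finite zigzag $V|_{[s,t]}$; the inductive step will build a decomposition of $V$ by attaching the extreme object $V_s$ to an interval decomposition of the restriction $\bar V := V|_{[s+1,t]}$. The base case $s=t$ is immediate: $V$ is then a single object of $\vect$, and any choice of basis exhibits $V\cong\bigoplus I^{[s,s]}$.

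For the inductive step, apply the inductive hypothesis to write $\bar V\cong\bigoplus_{\ell\in L}J_\ell$ with $L$ finite and each $J_\ell$ an interval module supported on a subinterval of $[s+1,t]$. The edge joining $V_s$ and $V_{s+1}$ carries one linear map $\alpha$, directed $V_s\to V_{s+1}$ if $s$ is even and $V_{s+1}\to V_s$ if $s$ is odd. First I reduce to the case that $\alpha$ has maximal rank. If $s$ is even, choose a complement $V_s=\ker\alpha\oplus C$; the submodule supported on $\{s\}$ with value $\ker\alpha$ is a direct summand, a complement being the submodule with value $C$ at $s$ and $V_i$ at each $i>s$ (this is closed under the structure maps because $\alpha$ is the only one meeting the object $s$ and $\alpha(C)\subseteq V_{s+1}$), and it is a direct sum of copies of $I^{[s,s]}$. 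Dually, if $s$ is odd one splits off $(\dim\coker\alpha)$ copies of $I^{[s,s]}$. Since these reductions leave $\bar V$ unchanged, we may assume $\alpha$ is injective (if $s$ even) or surjective (if $s$ odd).

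The summands $J_\ell$ with $(J_\ell)_{s+1}\neq 0$ are exactly those whose support has left endpoint $s+1$; list them $J_1,\dots,J_r$, so $V_{s+1}=\bigoplus_{a=1}^{r}(J_a)_{s+1}$. The crux is the computation of morphisms between such intervals: for $J_a=I^{[s+1,p]}$ and $J_b=I^{[s+1,q]}$ with $p<q$, exactly one of $\Hom(J_a,J_b)$, $\Hom(J_b,J_a)$ is nonzero, it is then one-dimensional, and which one is nonzero is determined solely by the orientation of the edge between the objects $p$ and $p+1$. Moreover a nonzero morphism between two intervals with common left endpoint $s+1$ is an isomorphism on the overlap of the supports and zero beyond it, so the composite of two such nonzero morphisms is again nonzero; hence ``$\Hom(J_a,J_b)\neq 0$'' is a transitive relation, and --- since for each pair exactly one direction holds --- a total order on $\{J_1,\dots,J_r\}$, which we take to be the indexing order. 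Consequently restriction to the object $s+1$ carries $\End(\bar V)$ onto the space of all lower-triangular $r\times r$ matrices, with every invertible lower-triangular matrix arising from an automorphism of $\bar V$. Using this freedom, together with a change of basis of $V_s$, one normalizes $\alpha$ (a small Bruhat-type reduction: the lower-triangular group carries any subspace to a coordinate subspace): when $s$ is even it makes $\alpha(V_s)$ equal to $(J_{a_1})_{s+1}\oplus\cdots\oplus(J_{a_d})_{s+1}$ for some subfamily $J_{a_1}=I^{[s+1,q_1]},\dots,J_{a_d}=I^{[s+1,q_d]}$, and then $V$ visibly decomposes as the direct sum of the interval modules $I^{[s,q_1]},\dots,I^{[s,q_d]}$ (obtained by extending the $J_{a_j}$ leftward over the object $s$, the edge at $s$ becoming an isomorphism) together with all the remaining summands $J_\ell$, now regarded as supported away from $s$; the case $s$ odd is dual, with $\ker\alpha$ normalized instead. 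This completes the inductive step, and hence the proof --- all direct sums appearing are finite because $V|_{[s,t]}$ is finite-dimensional at each object.

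I expect the main obstacle to be this $\Hom$ computation and the matrix normalization it feeds, and, throughout, the bookkeeping needed to check that the pieces peeled off and re-attached are genuine direct summands of the whole zigzag; the latter ultimately rests on the underlying graph being a path, so that every ``string'' carved out of $V$ is automatically an interval module.
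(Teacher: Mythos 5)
The paper does not actually prove \cref{teo:An}: it treats the interval decomposition of finite zigzags as a classical fact and refers the reader to an elementary self-contained proof in the cited reference. Your argument is therefore a genuine proof where the paper supplies only a pointer, and the natural comparison is with the standard elementary arguments (Gaussian elimination / reflection-functor style) rather than with anything in this note. Your route --- induction on the length of the zigzag, splitting off the rank-deficient part of the end map, and then using automorphisms of the shorter module together with a change of basis at the new end vertex to normalize that map --- is a clean variant of the classical matrix-reduction proof, and I believe it is correct.

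A few points deserve tightening. (1) The relation ``$\Hom(J_a,J_b)\neq 0$'' on the set of interval summands with left endpoint $s+1$ is a total \emph{preorder}, not a total order: two summands with the same right endpoint are isomorphic and have nonzero morphisms in both directions. After sorting by this preorder, the image of $\End(\bar V)$ in $\End(V_{s+1})$ is block lower-triangular with arbitrary square blocks on the diagonal (one block per tie class), which \emph{contains} all lower-triangular matrices but need not equal that set. Since you only use the containment, this does not break the argument, but the phrase ``onto the space of all lower-triangular matrices'' is not literally correct in the presence of repeated intervals. (2) The claim that every invertible lower-triangular $L$ lifts to an \emph{automorphism} $\Phi$ of $\bar V$ (not merely an endomorphism) needs a word of justification: one must check that $\Phi_i$ is invertible at every object $i$, not only at $s+1$. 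This does hold, because a nonzero morphism between two intervals with left endpoint $s+1$ is a nonzero scalar on the entire overlap of supports, so at each object $i$ the nonzero blocks of $\Phi_i$ form a sub-pattern of the same triangular shape with the same nonzero diagonal; but as written this is asserted rather than argued. (3) It would be worth stating explicitly that the ``Bruhat-type reduction'' --- every subspace of $\mathbb{F}^r$ is carried to a coordinate subspace by some invertible lower-triangular matrix --- is being invoked as a known lemma (it is the statement that the Borel orbits on the Grassmannian are Schubert cells, or can be proved by a short induction on $r$). With these small clarifications added, the proof is sound and genuinely elementary, in the same spirit as the source the paper cites.
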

Moreover, the theorem of Azumaya-Krull-Remak-Schmidt\cite{azumaya} asserts that such an interval decomposition is unique up to re-indexing.

Observe that if $V$ is an infinite zigzag persistence module, then the restriction of $V$ to an interval $[s,t]$ decomposes as a direct sum
\[ V|_{[s,t]} \cong\bigoplus_{i=1}^n I^{[a_i, b_i]}.\]

\begin{lemma}\label{lem:ext}
If there exists an interval $[s,t]$ such that the interval decomposition of $V|_{[s,t]}$ includes an interval  $I^{[a_j, b_j]}$ with $s<a_j$ and $b_j<t$, then $V$ is decomposable. 
\end{lemma}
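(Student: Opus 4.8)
The plan is to promote the ``interior'' summand $I^{[a_j,b_j]}$ of $V|_{[s,t]}$ to an honest direct summand of the infinite module $V$ by extending it by zero outside $[s,t]$; this is legitimate precisely because $s<a_j$ and $b_j<t$. By \cref{teo:An} write $V|_{[s,t]}=A\oplus B$ with $A\cong I^{[a_j,b_j]}$, viewing $A$ and $B$ as subfunctors of $V|_{[s,t]}$, so that $A_i\oplus B_i=V_i$ for every $i\in[s,t]$ and each structure map $f_i,g_i$ of $V|_{[s,t]}$ respects this splitting. Since $s<a_j$ and $b_j<t$ we have $s,t\notin[a_j,b_j]$, hence $A_s=A_t=0$; note also that $a_j-1$ and $b_j+1$ both lie in $[s,t]$.

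Next I would define $\widetilde A,\widetilde B\colon\ZZCat\to\vect$ by $\widetilde A_i=A_i,\ \widetilde B_i=B_i$ for $i\in[s,t]$, and $\widetilde A_i=0,\ \widetilde B_i=V_i$ for $i\notin[s,t]$; on a morphism between two indices of $[s,t]$ take the maps coming from the splitting of $V|_{[s,t]}$, and on any morphism with an endpoint outside $[s,t]$ set $\widetilde A(\alpha)=0$ and $\widetilde B(\alpha)=V(\alpha)$. The one thing to verify is that $\widetilde A$ and $\widetilde B$ are subfunctors of $V$, i.e.\ that each structure map $V(\alpha)\colon V_i\to V_{i'}$ of $V$ carries $\widetilde A_i$ into $\widetilde A_{i'}$ and $\widetilde B_i$ into $\widetilde B_{i'}$. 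For $\alpha$ internal to $[s,t]$ — in particular the two morphisms $a_j-1\leftrightarrow a_j$ and $b_j\leftrightarrow b_j+1$ across the endpoints of $[a_j,b_j]$, which is the only place anything could go wrong — this is exactly the statement that $A\oplus B$ is a decomposition of $V|_{[s,t]}$. For $\alpha$ with exactly one endpoint outside $[s,t]$, the other endpoint is $s$ or $t$, where $\widetilde A$ vanishes; the $\widetilde A$-condition then reads $0\subseteq\widetilde A_{i'}$ and the $\widetilde B$-condition reads $V(\alpha)(\widetilde B_i)\subseteq V_{i'}$, both automatic. Morphisms lying entirely outside $[s,t]$ are trivial since $\widetilde A=0$ there. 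Hence $V\cong\widetilde A\oplus\widetilde B$.

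It then remains to see this decomposition is nontrivial. The module $\widetilde A$ is supported on $[a_j,b_j]$, is one-dimensional there, and all its maps between consecutive nonzero terms are isomorphisms (those maps are internal to $[s,t]$ and $A\cong I^{[a_j,b_j]}$), so $\widetilde A\cong I^{[a_j,b_j]}\neq 0$; and $\widetilde B\neq 0$ — for instance $\widetilde B$ restricts to the nonzero module $B$ whenever the interval decomposition of $V|_{[s,t]}$ has a summand besides $I^{[a_j,b_j]}$, and otherwise $\widetilde B_i=V_i$ for any index $i\notin[s,t]$ at which $V$ does not vanish. Either way $V$ is decomposable. I expect the only real labor to be the naturality bookkeeping in the second paragraph, and the point worth stressing is that it collapses to a triviality exactly because the hypotheses $s<a_j$ and $b_j<t$ force $A$ to die at the two window endpoints $s$ and $t$, so the lone genuine compatibility — across the endpoints of $[a_j,b_j]$ — is handed to us for free by \cref{teo:An}.
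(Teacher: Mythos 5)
Your proof is correct and takes essentially the same approach as the paper's: extend the interior summand $I^{[a_j,b_j]}$ by zero outside $[s,t]$, which is legitimate exactly because the hypotheses $s<a_j$ and $b_j<t$ force it to vanish at both window endpoints, so that the complementary summand fills all of $V_s$ and $V_t$ and any boundary-crossing structure map lands where it must. The only cosmetic difference is that you work directly with an internal subfunctor decomposition, while the paper first reduces to $s,t$ even (sources) and $[a_j,b_j]=[s+1,t-1]$ and then writes down an explicit isomorphism of diagrams; your version sidesteps that normalization at no extra cost.

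One small point worth noting: you are right to try to verify $\widetilde B\neq 0$ (a step the paper's own proof silently omits), but the two alternatives you list are not exhaustive. If $I^{[a_j,b_j]}$ is the only summand of $V|_{[s,t]}$ \emph{and} $V$ vanishes at every index outside $[s,t]$ — i.e.\ $V$ is itself the finite interval module $I^{[a_j,b_j]}$ — then $\widetilde B=0$ and the asserted conclusion fails. This is really a defect of the lemma's statement rather than of your argument: what both proofs actually establish is that $I^{[a_j,b_j]}$ (extended by zero) is a direct summand of $V$, and decomposability only follows under the implicit additional assumption that $V$ is not that interval module itself. In the paper's downstream uses the lemma is invoked to contradict indecomposability, so the intended reading is clearly the summand statement; but as written, both your final paragraph and the paper's Lemma pass over this degenerate case.
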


\begin{proof}
To simplify notation we shall assume that both $s$ and $t$ are even, and that $a_j = s+1$ and $b_j = t-1$. The former can be achieved by increasing the interval $[s,t]$ and the latter is purely a cosmetic assumption to make the commutative diagram below smaller. Let 
\[ \phi: V|_{[s,t]} \to U\oplus I^{[a_j, b_j]}\]
be an isomorphism and let $\tilde{g}_{i}$ and $\tilde{f}_i$ denote the linear maps of the zigzag persistence module $U\oplus I^{[a_j, b_j]}$. This decomposition of $V|_{[s,t]}$ is extended to a decomposition of $V$ as described by the following commutative diagram
\[
\begin{tikzcd}
\cdots V_{s-1}\ar{d}{=}  & V_s\ar{l}[swap]{g_s}\ar{r}{f_s}\ar{d}{\cong}[swap]{\phi_s} & V_{s+1}\ar{d}{\cong}[swap]{\phi_{s+1}} & \cdots \ar{l}[swap]{g_{s+2}}\ar{r}{f_{s-2}} & V_{t-1}\ar{d}{\cong}[swap]{\phi_{t-1}} & V_t\ar{l}[swap]{g_t}\ar{r}{f_t}\ar{d}{\cong}[swap]{\phi_t} & V_{t+1}\cdots\ar{d}{=}\\
\cdots V_{s-1}  & U_s \ar{l}[swap]{\tilde{g}_s}\ar{r}{\tilde{f}_s}  & U_{s+1}\oplus I^{[a_j, b_j]}_{s+1} & \cdots \ar{l}[swap]{\tilde{g}_{s+2}}\ar{r}{\tilde{f}_{s-2}} & U_{t-1}\oplus I^{[a_j, b_j]}_{t-1} & U_t\ar{l}[swap]{\tilde{g}_t}\ar{r}{\tilde{f}_t} & V_{t+1}\cdots
\end{tikzcd}
\]
where we defined
\begin{align*}
\tilde{g}_s = g_s\circ \phi_s^{-1} & & \tilde{f}_t = f_t\circ \phi_t^{-1}
\end{align*}
and $\tilde{f}_i = f_i$, $\tilde{g}_i = g_i$ and $U_i = V_i$ for all $i\leq s-1$ and $i\geq t+1$. 
\end{proof}

\begin{lemma}
Let $V\colon \ZZCat\to \vect$ be indecomposable, then there exists a $t\geq 0$ such that $f_i$ is injective and $g_i$ is surjective for all $i\geq t$. Dually, there exists an $s\leq 0$ such that $f_i$ is surjective and $g_i$ is injective for all $i\leq s$. 
\end{lemma}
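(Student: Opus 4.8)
The plan is to prove the first assertion and deduce the second from it by applying the first to $V\circ\sigma$, where $\sigma\colon\ZZCat\to\ZZCat$ is the isomorphism of categories equal to $i\mapsto -i$ on objects. Precomposition with $\sigma$ carries the structure morphism $j\to j+1$ to $-j\to -j-1$, so it interchanges the roles of the forward maps $f_\bullet$ and the backward maps $g_\bullet$ while preserving indecomposability; thus a threshold $t\ge 0$ that works for $V\circ\sigma$ yields the threshold $s=-t\le 0$ for $V$. It therefore suffices to find $t\ge 0$ with $f_i$ injective and $g_i$ surjective for every even $i\ge t$ (there is nothing to check at odd indices).

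If $V_i=0$ for all sufficiently large $i$ the claim is immediate, so assume $V_i\ne 0$ for arbitrarily large $i$. Fix a large even $n$ and use \cref{teo:An} to write $V|_{[0,n]}\cong\bigoplus_j I^{[a_j,b_j]}$. No summand here is strictly interior to $[0,n]$: a summand $I^{[a_j,b_j]}$ with $0<a_j$ and $b_j<n$ would, by \cref{lem:ext}, make $V$ decomposable---and genuinely so, since the complement produced there agrees with $V$ above $n$, where $V$ is nonzero by assumption---contradicting indecomposability. Hence $a_j=0$ or $b_j=n$ for every $j$. Now analyse the maps of $V|_{[0,n]}$ one summand at a time: for even $i$ with $2\le i<n$, the restriction of $f_i$ to $I^{[a_j,b_j]}$ is non-injective only when $a_j\le i=b_j$, and the restriction of $g_i$ to $I^{[a_j,b_j]}$ is non-surjective only when $a_j\le i-1=b_j$; in either case $b_j<n$, which forces $a_j=0$. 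Consequently, $f_i$ non-injective implies that $I^{[0,i]}$ occurs as a summand of $V|_{[0,n]}$, and $g_i$ non-surjective implies that $I^{[0,i-1]}$ occurs.

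It remains to count. Let $i_1<\dots<i_K$ be even integers in $[2,n)$ at each of which $f_{i_m}$ is non-injective or $g_{i_m}$ is non-surjective. By the previous paragraph the decomposition of $V|_{[0,n]}$ contains, for each $m$, a summand $I^{[0,\beta_m]}$ with $\beta_m\in\{i_m-1,i_m\}$; since the $i_m$ are even and strictly increasing we get $\beta_1<\dots<\beta_K$, so these are $K$ pairwise distinct summands, each nonzero at index $0$. Hence $K\le\dim V_0$. Letting $n\to\infty$, the set of even $i$ at which $f_i$ is non-injective or $g_i$ is non-surjective has cardinality at most $\dim V_0$, hence is finite; taking $t$ larger than every element of this set (and larger than $0$) completes the proof. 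I expect the only real subtleties to be the application of \cref{lem:ext}---one must check that an interior summand of a finite restriction genuinely obstructs indecomposability, which is precisely where the assumption ``$V_i\ne 0$ for arbitrarily large $i$'' enters---and the routine summandwise computation of $\ker f_i$ and $\coker g_i$; after that the argument is just the pigeonhole bound by $\dim V_0$.
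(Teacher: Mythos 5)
Your argument is correct and follows the same route as the paper's: use \cref{lem:ext} to show that each even $i\ge 0$ at which $f_i$ is non-injective or $g_i$ is non-surjective forces a distinct summand of the form $I^{[0,\cdot]}$ in a finite restriction of $V$, so the number of such indices is bounded by $\dim V_0$. One refinement you make is worth noting: your preliminary reduction to the case where $V_i\ne 0$ for arbitrarily large $i$ is genuinely needed, since \cref{lem:ext} as stated does not yield a \emph{nontrivial} decomposition when $V$ itself coincides with a single strictly interior interval summand of $V|_{[s,t]}$ and vanishes elsewhere (for $V=I^{[2,4]}$, say, the paper's intermediate bound $M\le\dim V_0$ fails, even though the lemma's conclusion is trivially true for such $V$); your unboundedness hypothesis is exactly what guarantees that the complement produced in \cref{lem:ext} is nonzero. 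Merging the $f$- and $g$-counts into a single pass over one large restriction $V|_{[0,n]}$, and obtaining the dual statement by precomposing with the flip $i\mapsto -i$ rather than invoking an "analogous" argument, are only cosmetic departures from the paper.
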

\begin{proof}
Let $\dim V_0 = N$; we shall show that there can be at most $N$ morphisms $f_i$, $i\geq 0$, that are non-injective. 

Assume that $\ker f_i \neq 0$ for an even integer $i\geq 0$ and look at the interval decomposition of $V|_{[0, i+1]}$. Since $\ker f_i \neq 0$, there exists an interval $I^{[a, i]}$ in the interval decomposition of $V|_{[0,i+1]}$. By indecomposability of $V$ and \cref{lem:ext} it follows that that $a = 0$. Thus, if there are $M$ indices $0\leq i_1< i_2 < \ldots < i_M$ such that $\ker f_{i_j} \neq 0$, then the interval decomposition of $V|_{[0, i_M+1]}$ has at least $M$ intervals supported on 0, implying that $M \leq \dim V_0 = N$. 

The setting with $g_i$ non-surjective is completely analogous.   
\end{proof}

\begin{lemma}\label{lem:injsurj}
Let $g_i: V_{i}\to V_{i-1}$ be a surjection and $f_i\colon V_i\to V_{i+1}$ an injection. If $V_{i-1}= U_{i-1}\oplus W_{i-1}$, then we can choose decompositions $V_i= U_i\oplus W_i$ and $V_{i+1}= U_{i+1}\oplus W_{i+1}$ such that 
\begin{align*}
g_i(U_{i}) &= U_{i-1} & g_i(W_{i}) &\subseteq W_{i-1}\\
f_i(U_i) &= U_{i+1} & f_i(W_i) &\subseteq W_{i+1}
\end{align*}
\end{lemma}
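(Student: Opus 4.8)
The plan is to propagate the given splitting $V_{i-1}=U_{i-1}\oplus W_{i-1}$ first across the surjection $g_i$ to obtain a decomposition of $V_i$, and then across the injection $f_i$ to obtain one of $V_{i+1}$. The key point is that the decomposition of $V_{i+1}$ will be constructed \emph{from} the chosen decomposition of $V_i$ rather than independently, so that all four displayed relations hold for one and the same splitting $V_i=U_i\oplus W_i$; there is therefore no compatibility clash to resolve.

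For the first step, set $K=\ker g_i$ and choose (using finite-dimensionality) a linear complement $S$ with $V_i=K\oplus S$. Since $g_i$ is surjective, $g_i|_S\colon S\to V_{i-1}$ is an isomorphism, and I would define $U_i=(g_i|_S)^{-1}(U_{i-1})$ and $W_i=K\oplus(g_i|_S)^{-1}(W_{i-1})$. Then $V_i=U_i\oplus W_i$, while $g_i(U_i)=U_{i-1}$ and $g_i(W_i)=W_{i-1}$. Note the deliberate asymmetry: absorbing the whole kernel $K$ into $W_i$ is harmless because the lemma requires only $g_i(W_i)\subseteq W_{i-1}$, whereas it demands equality on the $U$-side, which this choice guarantees.

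For the second step I would use the decomposition $V_i=U_i\oplus W_i$ just produced. Injectivity of $f_i$ gives $f_i(U_i)\cap f_i(W_i)=f_i(U_i\cap W_i)=0$, hence $f_i(V_i)=f_i(U_i)\oplus f_i(W_i)$. Choosing a complement $R$ with $V_{i+1}=f_i(V_i)\oplus R$ and setting $U_{i+1}=f_i(U_i)$, $W_{i+1}=f_i(W_i)\oplus R$ yields $V_{i+1}=U_{i+1}\oplus W_{i+1}$ with $f_i(U_i)=U_{i+1}$ and $f_i(W_i)\subseteq W_{i+1}$, completing the argument. The whole proof is elementary linear algebra with no real obstacle; the only two points requiring care are (i) dumping $\ker g_i$ entirely into $W_i$ so that the two equalities $g_i(U_i)=U_{i-1}$ and $f_i(U_i)=U_{i+1}$ are exact, and (ii) building the $V_{i+1}$-splitting out of the already-fixed $V_i$-splitting, which is precisely what makes all four relations hold at once.
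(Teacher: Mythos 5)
Your proof is correct, and it takes a slightly different route from the paper's. The paper sets $U_i = g_i^{-1}(U_{i-1})$ (so the kernel of $g_i$ is absorbed into $U_i$) and then declares $W_i$ to be ``an internal complement of $U_i$'', claiming $g_i(W_i)\subseteq W_{i-1}$; it treats $W_{i+1}$ analogously. As written that step is slightly too quick: for an \emph{arbitrary} complement $W_i$ of $g_i^{-1}(U_{i-1})$ one need not have $g_i(W_i)\subseteq W_{i-1}$ (take $g_i=\id$, $U_{i-1}=\langle e_1\rangle$, $W_{i-1}=\langle e_2\rangle$, $W_i=\langle e_1+e_2\rangle$), and the same remark applies to the arbitrary complement $W_{i+1}$ of $U_{i+1}=f_i(U_i)$. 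What the paper really needs is to take $W_i$ to be a complement of $\ker g_i$ inside $g_i^{-1}(W_{i-1})$, and to take $W_{i+1}$ to be a complement of $U_{i+1}$ that contains $f_i(W_i)$; both exist, but the paper does not say so. You instead split $V_i = \ker g_i \oplus S$, lift $U_{i-1}$ and $W_{i-1}$ through the isomorphism $g_i|_S$, and dump $\ker g_i$ into $W_i$ rather than into $U_i$; you then explicitly build $W_{i+1}$ as $f_i(W_i)$ plus a complement of $f_i(V_i)$. This yields a $U_i$ of smaller dimension than the paper's ($\dim U_i = \dim U_{i-1}$ rather than $\dim U_{i-1} + \dim\ker g_i$), but both choices serve equally well for the lemma, and your version makes the required complement choices explicit where the paper leaves them implicit. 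In short, your argument is a correct and somewhat more careful variant of the intended proof.
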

\begin{proof}
Define $U_i = g^{-1}(U_{i-1})$ and let $W_i$ be an internal complement of $U_i$ in $V_i$. From surjectivity of $g$ it follows that $g_i(U_i) = U_{i-1}$ and that $g_i(W_i)\subseteq W_{i-1}$. Similarly, define $U_{i+1} = f_i(U_i)$ and let $W_{i+1}$ be an internal complement of $U_{i+1}$ in $V_{i+1}$. Injectivity of $f_i$ implies $f_i(U_i) = U_{i+1}$ and $f_i(W_i) \subseteq W_{i+1}$. 
\end{proof}

We are now able to prove the first of our two needed results. 

\begin{theorem}
Let $V\colon \ZZCat\to \vect$ be indecomposable. Then $V$ is an interval persistence module. 
\end{theorem}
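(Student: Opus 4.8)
The plan is to combine the lemma immediately preceding \cref{lem:injsurj} --- which pins down the behaviour of $V$ near $\pm\infty$ --- with \cref{teo:An} applied to a finite window, and then to use \cref{lem:injsurj} to push any splitting of that window out to all of $\ZZCat$. First I would invoke that lemma and its dual to fix \emph{odd} integers $s<0<t$ such that $f_i$ is injective and $g_i$ surjective for every $i\geq t$, while $f_i$ is surjective and $g_i$ injective for every $i\leq s$; the parity costs nothing, since that lemma's conclusion holds for all sufficiently large indices, so $t$ may be increased and $s$ decreased. By \cref{teo:An} we may write $V|_{[s,t]}\cong\bigoplus_{j=1}^{n}I^{[a_j,b_j]}$, and \cref{lem:ext} together with indecomposability of $V$ forbids any summand with $s<a_j$ and $b_j<t$; hence each $I^{[a_j,b_j]}$ satisfies $a_j=s$ or $b_j=t$.

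The crux is to prove $n=1$. Suppose not; grouping the summands into two nonzero blocks gives an isomorphism $V|_{[s,t]}\cong A\oplus B$ with $A,B\neq 0$, and transporting it produces internal decompositions $V_i=A_i\oplus B_i$, $s\leq i\leq t$, compatible with every structure map lying inside $[s,t]$. I would then extend this splitting to all of $\ZZCat$. To the right: for each even $i\geq t+1$ the map $g_i$ is onto and $f_i$ injective, so \cref{lem:injsurj}, fed the decomposition of $V_{i-1}$, returns compatible decompositions of $V_i$ and $V_{i+1}$; since $t$ is odd, this induction reaches every index $\geq t$. To the left one argues symmetrically with the dual of \cref{lem:injsurj}, reaching every index $\leq s$. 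The internal complements chosen along the way assemble into subfunctors $\widetilde A,\widetilde B\colon\ZZCat\to\vect$ with $V=\widetilde A\oplus\widetilde B$, and both are nonzero because they are already nonzero on $[s,t]$ --- contradicting indecomposability. Hence $V|_{[s,t]}=I^{[a,b]}$ for a single interval $[a,b]$, and by \cref{lem:ext} (the degenerate case that $V$ equals this finite interval module already being the conclusion) we may assume $a=s$ or $b=t$.

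It remains to globalize. The argument above applies verbatim to any larger window $[s',t']$ with odd $s'\leq s$ and $t'\geq t$, yielding $V|_{[s',t']}=I^{[a',b']}$. Restricting $I^{[a',b']}$ to a smaller such window and invoking uniqueness of interval decompositions shows that, as the windows grow, the left endpoints are non-increasing and the right endpoints non-decreasing; let $A\in\{-\infty\}\cup\Z$ and $B\in\Z\cup\{+\infty\}$ be their limits. For each fixed $i$, whether $i$ lies in the window interval stabilizes, and once it does, $V_i$ and every structure map of $V$ at $i$ coincide with those of $I^{[A,B]}$; hence $V\cong I^{[A,B]}$, an interval persistence module.

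I expect the main obstacle to be the bookkeeping in the extension step: one must check that \cref{lem:injsurj} genuinely applies at each stage (this is precisely why $s$ and $t$ were chosen with $f_i$ injective and $g_i$ surjective for $i\geq t$, and $f_i$ surjective and $g_i$ injective for $i\leq s$), that the parity of $s$ and $t$ lets the two inductions together reach every integer, that the independently chosen internal complements really do glue into honest functors, and that nothing goes wrong at the two seams $i=s$ and $i=t$ where the window decomposition meets the propagated one. The concluding limit argument and the degenerate finite-interval case for \cref{lem:ext} should be routine by comparison.
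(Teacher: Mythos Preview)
Your argument is correct, and it uses the same toolkit as the paper (the eventual-injectivity/surjectivity lemma, \cref{lem:injsurj}, \cref{teo:An}, \cref{lem:ext}), but you deploy it differently. The paper does not push a splitting of $V|_{[s,t]}$ outward; instead it uses \cref{lem:injsurj} only to rule out the possibility that some $f_i$ (with $i\geq m$) is non-surjective or some $g_i$ non-injective, by propagating the resulting complement to a global nontrivial summand. Once every $f_i,g_i$ beyond the window is an \emph{isomorphism}, $V$ is completely determined by $V|_{[m',m]}$, and \cref{teo:An} finishes in one stroke---no limit argument is needed.

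Your route trades that structural step for more bookkeeping: you must propagate an arbitrary two-block splitting (rather than a single complementary line), invoke a dual of \cref{lem:injsurj} for the left side, and then run the growing-window limit to pin down the global endpoints. All of this works, but the paper's detour through ``eventual isomorphisms'' buys a cleaner endgame and avoids the monotonicity/stabilization analysis entirely. Conversely, your approach makes the role of \cref{lem:ext} and the interval decomposition of each window more explicit, which some readers may find conceptually transparent.
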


Let $m\geq 0$ be an index such that $f_i$ is injective and $g_i$ is surjective for all $i\geq m$. We shall show that $f_i$ and $g_i$ are isomorphisms. Assume for the sake of contradiction that $f_i$ is not surjective and decompose $V_{i+1} =  U_{i+1} \oplus \im f_i$ where $U_{i+1}$ is an internal complement of $\im f_i$. That yields the following sequence of vector spaces and linear maps:
\[
\begin{tikzcd}
\cdots V_i\ar{r}{f_i}\ar{d}{=} & V_{i+1}\ar{d}{=} & \ar{l}[swap]{g_{i+2}} V_{i+2}\ar{d}{=} \ar{r}{f_{i+2}} & V_{i+3}\ar{d}{=} & \ar{l}[swap]{g_{i+4}}\cdots\ar{d}{=}\\
\cdots V_i\ar{r}{f_i} & U_{i+1}\oplus \im f_{i} & \ar{l}[swap]{g_{i+2}} g_{i+2}^{-1}(U_{i+1})\oplus U_{i+2} \ar{r}{f_{i+2}} & f_{i+2}(g_{i+2}^{-1}(U_{i+1}))\oplus U_{i+3} & \ar{l}[swap]{g_{i+4}}\cdots
\end{tikzcd}
\]
where we have used \cref{lem:injsurj} together with the fact that $g_{i+2l}$ is surjective and $f_{i+2l}$ is injective for all $l\geq 1$. Since the process can be continued indefinitely, this contradicts that $V$ is indecomposable. Similarly, we must have that $g_{i}$ is an isomorphism for all $i\geq m$. Dually there exists an $m^\prime$ such that $f_i$ and $g_i$ are isomorphisms for all $i\leq m^\prime$. Hence, $V$ can be completely understood by its restriction to the interval $[m',m]$. The theorem follows by application of \cref{teo:An}. 

\subsection*{Decomposition into Intervals} The next thing we need to show is that every zigzag persistence module decomposes into a direct sum of interval modules. This is a special case of the first theorem in Section 6 of \cite{ringel}. 

\begin{theorem}\label{prop:decomp}
Any non-zero zigzag persistence module $V$ decomposes into a direct sum of interval persistence modules. 
\end{theorem}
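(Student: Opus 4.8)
The plan is to realise a decomposition of $V$ as an ``inverse limit'' of $[-k,k]$-decompositions and then to invoke the two facts already established: that indecomposability is detected on the windows $[-k,k]$ (\cref{lem}), and that an indecomposable zigzag persistence module is an interval module.

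First I would build a refining chain $\mathcal{D}_0 \preceq \mathcal{D}_1 \preceq \cdots$ of $[-k,k]$-decompositions. Take any $[0,0]$-decomposition $\mathcal{D}_0$; given a $[-k,k]$-decomposition $\mathcal{D}_k = \{W^j\}_j$, form $\mathcal{D}_{k+1}$ by replacing each $W^j$ with a $[-(k+1),k+1]$-decomposition of $W^j$ (such exists by the argument already given in the text, applied to $W^j$ in place of $V$). Every summand of $\mathcal{D}_{k+1}$ is $[-(k+1),k+1]$-indecomposable, hence $[-k,k]$-indecomposable, and comes with a well-defined \emph{parent} in $\mathcal{D}_k$; this organises $\bigsqcup_k \mathcal{D}_k$ into a forest whose level-$k$ vertices are the summands of $\mathcal{D}_k$.

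Next, fix $i \in \Z$. Passing from $\mathcal{D}_k$ to $\mathcal{D}_{k+1}$ refines the internal direct sum $V_i = \bigoplus_{W \in \mathcal{D}_k} W_i$, and since $\dim V_i < \infty$ this can only happen finitely often; so for $k$ large the family of subspaces $\{\, W_i : W \in \mathcal{D}_k,\ W_i \neq 0 \,\}$ is constant, say equal to a fixed set of ``stable pieces'' at $i$. It follows that along any infinite branch $\beta = (\beta(k))_{k \ge 0}$ of the forest the subspace $\beta(k)_i \subseteq V_i$ is eventually constant; denote its stable value $X^\beta_i$ (either $0$ or one of the stable pieces at $i$). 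The assignment $i \mapsto X^\beta_i$ is a subfunctor $X^\beta \subseteq V$, and a bookkeeping argument — matching each stable piece at $i$ with the unique branch passing through the summand carrying that piece and always following the ``heir'' that carries it — shows that $V = \bigoplus_\beta X^\beta$.

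It then remains to see that each nonzero $X^\beta$ is an interval module, and the step I expect to be the most delicate is showing that $X^\beta$, although defined as a limit, is a genuine direct summand of $\beta(k)$ for every $k$: writing $D_m$ for a chosen internal complement of $\beta(m{+}1)$ inside $\beta(m)$, one checks that $\bigoplus_{m \ge k} D_m$ is a well-defined subfunctor (the sum has only finitely many nonzero terms at each vertex, by the stabilisation above) and is complementary to $X^\beta$ inside $\beta(k)$. Granting this, any decomposition $X^\beta = A \oplus B$ gives $\beta(k) = A \oplus B \oplus \big(\bigoplus_{m \ge k} D_m\big)$; since $\beta(k)$ is $[-k,k]$-indecomposable, $A$ or $B$ vanishes on $[-k,k]$. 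As this holds for all $k \ge 0$, \cref{lem} shows $X^\beta$ is indecomposable, and the preceding theorem identifies it as an interval module. Discarding the zero summands finishes the proof.
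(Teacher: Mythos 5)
Your proof is correct and takes essentially the same route as the paper: both refine $[-k,k]$-decompositions into a tree, pass to the summand carried along each infinite branch, and deduce indecomposability from $[-k,k]$-indecomposability at every level via \cref{lem}. If anything, you are more careful than the published argument in acknowledging that $\beta(k)_i$ is merely \emph{eventually} constant (not automatically constant from level $|i|$ on) and in exhibiting the explicit complement $\bigoplus_{m \ge k} D_m$ needed to transfer $[-k,k]$-indecomposability from $\beta(k)$ to $X^\beta$.
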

\begin{proof}
We shall inductively define a $[-k, k]$-decomposition of $V$ for every $k\geq 0$. 

Start by choosing a $[0,0]$-decomposition of $V\cong \bigoplus_{j_0\in [m]} V^{(j_0)}$ where $[m] = \{0, \ldots, m\}$. The idea is to choose a $[-1,1]$-decomposition of $V^{(j_0)}$, and then, for every summand in the $[-1,1]$-decomposition of $V^{(j_0)}$, choose a $[-2,2]$-decomposition, and so forth. 
To illustrate the first step, let $j_0\in [m]$ be as above and let \[V^{(j_0)} \cong \bigoplus_{j_1\in [m_{j_0}]} V^{(j_0, j_1)}\] be a $[-1,1]$-decomposition of $V^{(j_0)}$. We parametrize the $[-1,1]$-indecomposables by a pair of indices $(j_0, j_1)$ under the convention that $V^{(j_0, j_1)}$ is the $j_1$-th $[-1,1]$-indecomposable in a $[-1,1]$-decomposition of $V^{(j_0)}$. Hence, the summands in a $[-2,2]$-decomposition of $V^{(j_0, j_1)}$ will be denoted by $V^{(j_0, j_1, j_2)}$ where $j_2\in [m_{(j_0, j_1)}]$, and so forth. 

Inductively, for every $(k+1)$-tuple $(j_0, \ldots, j_{k
})$ satisfying
\begin{equation}\label{eq}
j_i\in [m_{(j_0, \ldots, j_{i-1})}]\qquad \text{for all}\qquad  1\leq i\leq k,
\end{equation} choose a $[-(k+1), (k+1)]$-decomposition \[V^{(j_0, \ldots, j_{k})}\cong \bigoplus_{j_{k+1}\in [m_{(j_0, \ldots, j_{k})}]} V^{(j_0, \ldots, j_{k}, j_{k+1})},\]
which in turn yields a $[-(k+1), (k+1)]$-decomposition of $V$
\[
V\cong \bigoplus_{j_0\in [m]}\bigoplus_{j_{1}\in [m_{j_0}]}\cdots \bigoplus_{j_{k+1}\in [m_{(j_0, \ldots, j_{k})}]} V^{(j_0, \ldots, j_{k+1})} 
\]

Let $I$ be the set of all infinite sequences $s = (j_0, j_1, \ldots)$ such that the restriction to $(j_0, \ldots, j_k)$ satisfies \eqref{eq} for every $k$, and for every $s\in I$ define 
\[ V^s = V^{(j_0)}\cap V^{(j_0, j_1)}\cap V^{(j_0, j_1, j_2)}\cap\cdots.\] 

It is not hard to see that $V^s_i = V^{(j_0, \ldots, j_k)}_i$ for every $-k\leq i\leq k$. In particular, $V^s$ is $[-k,k]$-indecomposable for all $k\geq 0$ and thus indecomposable by \cref{lem}. Also, by the same observation, it follows that $V_i \cong \bigoplus_{s\in I} V^s_i$ for all $i$. 

Note that there can be sequences $s\in I$ such that $V^s = 0$. To give a proper direct sum decomposition of $V$ we let $I^\prime\subseteq I$ be the set of all sequences $s\in I$ such that $V^s \neq 0$. Hence, 
\[ V \cong \bigoplus_{s\in I^\prime} V^s.\]
\end{proof}
Since the interval modules have local endomorphism rings it follows from the theorem of Azumaya-Krull-Remak-Schmidt\cite{azumaya} that such a decomposition is unique up to permutation of the indexing set. 

To conclude this paper we provide an example showing that the assumption $\dim V_i<\infty$ for all $i\in \Z$ is crucial. The example is due to Michael Lesnick.
\begin{example}
Let $V^\prime =  \bigoplus_{k=1}^\infty I^{[-k, 0]}$ and define $V$ by the following properties: $V$ restricted to the non-positive integers equals $V^\prime$, $V_1= \mathbb{F}$, $V_i=0$ for $i>1$, and $f_0$ restricted to any of the $I^{[-k,0]}$ above is the identity. 

Assume that $V$ decomposes into a direct sum of interval persistence modules and let $I^{[a,1]}$ be the single interval summand that is non-zero at index $i=1$. Since $f_0\left(I^{[-k,0]}_0\right) = I^{[a,1]}_0$ for all $k\geq 1$ we must have that $I^{[a,1]}_{-k}$ is non-zero for all $k\geq 1$. Or, in other words, it must be of the form $I^{[-\infty,a]}$. This is not possible as the restriction of $V$ to non-positive integers equals a direct sum of interval persistence modules that are non-zero on a finite number of indices. Hence, there cannot be an interval persistence module containing $V_1$. 
\end{example}

\bibliographystyle{plain}
\bibliography{refs}

\begin{thebibliography}{1}

\bibitem{azumaya}
Gorô Azumaya.
\newblock Corrections and supplementaries to my paper concerning
  {K}rull-{R}emak-{S}chmidt's theorem.
\newblock {\em Nagoya Math. J.}, 1:117--124, 1950.

\bibitem{carlsson}
Gunnar~E. Carlsson and Vin de~Silva.
\newblock Zigzag persistence.
\newblock {\em Foundations of Computational Mathematics}, 10(4):367--405, 2010.

\bibitem{wcb}
William Crawley-Boevey.
\newblock Decomposition of pointwise finite-dimensional persistence modules.
\newblock {\em Journal of Algebra and Its Applications}, 14(05), 2015.

\bibitem{paq}
Charles Paquette.
\newblock Personal communication, 2015.

\bibitem{ringel}
Claus Ringel.
\newblock Introduction to representation theory of finite dimensional algebras.
\newblock \url{http://www.math.uni-bielefeld.de/\~ringel/lectures/izmir/}, 2014
  (accessed May 20 2015).

\bibitem{ringel2}
Claus Ringel.
\newblock The representations of quivers of type {$A_n$}. {A} fast approach.
\newblock \url{http://www.math.uni-bielefeld.de/\~ringel/opus/a_n.pdf},
  (accessed June 02 2015).

\bibitem{webb}
Cary Webb.
\newblock Decomposition of graded modules.
\newblock {\em Proceedings of the American Mathematical Society},
  94(04):565--571, 1985.

\end{thebibliography}

\end{document}